\newtheorem{theorem}{Theorem}
\newtheorem{definition}[theorem]{Definition}
\newtheorem{lemma}[theorem]{Lemma}
\newtheorem{remark}[theorem]{Remark}
\newenvironment{proof}[1][Proof]{\noindent\textbf{#1.} }{\ \rule{0.5em}{0.5em}}
\begin{document}

\begin{center}
\bigskip

{\Large Approximation properties for Baskakov-Kantorovich-Stancu type
operators based on }${\Large q-}${\Large \ integers\bigskip }

\c{C}i\u{g}dem Atakut$^{1}$, \.{I}brahim B\"{u}y\"{u}kyaz\i c\i $^{2}$
\bigskip

$^{1}${\small Ankara University, Faculty of Science, Department of
Mathematics, Tandogan 06100, Ankara, Turkey. }

{\small e-mail: atakut@science.ankara.edu.tr\bigskip }

$^{2}${\small Gazi University, Department of Mathematics, \c{C}ubuk, Ankara,
Turkey.}

{\small \ e-mail: bibrahim@gazi.edu.tr}
\end{center}

\textit{Abstract: }In this paper, we give an interesting generalization of
the Stancu type Baskakov-Kantorovich operators based on the $q-$integers and
investigate their approximation properties. Also, we obtain the estimates
for the rate of convergence for a sequence of them by the weighted modulus
of smoothness.\bigskip

\textit{Key words:} $q-$integer; $q-$ Baskakov-Kantorovich operators;
Baskakov-Kantorovich-Stancu operators; Weighted spaces; Rate of convergence;
Weighted modulus of smoothness.\bigskip 

\textit{2000 MSC: 41A10; 41A25; 41A36.}

\section{\protect\bigskip Introduction}

In recent years, due to the intensive development of $q-$ calculus,
generalizations of some operators related to $q-$ calculus have emerged (see 
$[2,5,6,10-18]$). Aral and Gupta defined $q-$ generalization of the Baskakov
operator and investigated approximation properties of these operators in $%
[2] $. In $[12],$ Gupta and Radu introduced the Baskakov- Kantorovich
operators based on $q-$integers and investigated their weighted statistical
approximation properties. They also proved some direct estimations for error
using weighted modulus of smoothness in case $0<q<1.$ In recent study B\"{u}y%
\"{u}kyaz\i c\i\ and Atakut $[5]$ introduced a new Stancu type
generalization of $q-$ Baskakov operator is defined as

\begin{equation}
L_{n}^{\alpha ,\beta }(f;q,x)=\dsum\limits_{k=0}^{\infty }q^{\frac{k(k-1)}{2}%
}\dfrac{D_{q}^{k}\left( \varphi _{n}(x)\right) }{\left[ k\right] _{q}!}%
(-x)^{k}\ \ f(\frac{1}{q^{k-1}}\frac{\left[ k\right] _{q}+q^{k-1}\alpha }{%
\left[ n\right] _{q}+\beta })\   \tag{1}
\end{equation}%
where\ $0\leq \alpha \leq \beta ,$ $q\in (0,1),\ f\in C[0,\infty )$ and the
following conditions are provided:

Let $\left\{ \varphi _{n}\right\} $ $(n=1,2,...)$ $\varphi _{n}:%
\mathbb{R}
\rightarrow 
\mathbb{R}
$ \ be a sequence which is satisfying following conditions,

$(i)$ $\varphi _{n}$ $(n=1,2,...),$ $k-$ times continuously $q-$
differentiable any closed interval $\left[ 0,b\right] $,

$(ii)$ $\varphi _{n}(0)=1,(n=1,2,...),$

$(iii)$ for all $x\in \left[ 0,b\right] ,$ and $(k=0,1,...;n=1,2,...)$, $%
(-1)^{k}D_{q}^{k}\left( \varphi _{n}(x)\right) \geq 0,$

$(iv)$ there exists a positive integer $m(n)$, such that 
\begin{equation}
D_{q}^{k}\left( \varphi _{n}(x)\right) =-\left[ n\right] _{q}D_{q}^{k-1}%
\varphi _{m(n)}(x)(1+\alpha _{k,n}(x)),  \tag{2}
\end{equation}
$(k=0,1,...;n=1,2,...)$ and $x\in \left[ 0,b\right] $ where $\alpha
_{k,n}(x) $ converges to zero for $n\rightarrow \infty $ uniformly in $k.$

$(v)\lim\limits_{n\rightarrow \infty }\frac{\left[ n\right] _{q}}{\left[ m(n)%
\right] _{q}}=1$.

Now, to explain the construction of the new $q-$ operators, we mention some
basic definitions of $q-$ calculus and Lemma.

Let $q>0$. For each nonnegative integer $n$, we define the $q-$ integer $%
[n]_{q}$ as 
\begin{equation*}
\lbrack n]_{q}=%
\begin{cases}
(1-q^{n})/(1-q) & \mbox{if}\quad q\neq 1 \\ 
n & \mbox{if}\quad q=1%
\end{cases}%
\end{equation*}%
and the $q-$ factorial $[n]_{q}!$ as 
\begin{equation*}
\lbrack n]_{q}!=%
\begin{cases}
\lbrack n]_{q}[n-1]_{q}\cdots \lbrack 1]_{q} & \mbox{if}\quad n\geq 1 \\ 
1 & \mbox{if}\quad n=0%
\end{cases}%
\end{equation*}%
For the integers $n$ and $k$, with $0\leq k\leq n$, the $q-$ binomial
coefficients are then defined as follows (see $[14]$): 
\begin{equation*}
\begin{bmatrix}
n \\ 
k%
\end{bmatrix}%
_{q}=\frac{[n]_{q}!}{[k]_{q}![n-k]_{q}!}.
\end{equation*}

Note that the following relation is satisfied

\begin{equation*}
\lbrack n]_{q}=[n-1]_{q}+q^{n-1}.
\end{equation*}

\begin{definition}
The $q-$ derivative of a function $f$ with respect to $x$ is 
\begin{equation*}
D_{q}\left( f(x)\right) =\dfrac{f(qx)-f(x)}{qx-x}
\end{equation*}%
which is also known as the Jackson derivative. High $q-$ derivatives are%
\begin{equation*}
D_{q}^{0}\left( f(x)\right) =f(x)\ ,\ D_{q}^{n}\left( f(x)\right)
=D_{q}\left( D_{q}^{n-1}\left( f(x)\right) \right) \ ,\ n=1,2,3,...
\end{equation*}
\end{definition}

Note that as $q\rightarrow 1$, the $q-$ derivative approach the usual
derivative.

\begin{definition}
The $q-$integration is defined as%
\begin{equation*}
\dint\limits_{0}^{a}f(t)d_{q}t=(1-q)a\dsum\limits_{j=0}^{\infty
}f(q^{j}a)q^{j}\ \ \ \ \ ,\ \ a>0.
\end{equation*}%
Over a general interval $\left[ a,b\right] $, $0<a<b$, one defines%
\begin{equation*}
\dint\limits_{a}^{b}f(t)d_{q}t=\dint\limits_{0}^{b}f(t)d_{q}t-\dint%
\limits_{0}^{a}f(t)d_{q}t.
\end{equation*}
\end{definition}

\begin{definition}
Let \ $f(x)$ be a continuous function on some interval $\left[ a,b\right] $
and $c\in \left( a,b\right) $. Jackson's $\ q-$ Taylor formula (see $\mathit{%
[}13,14])$ is given by 
\begin{equation*}
f(x)=\dsum\limits_{k=0}^{\infty }\dfrac{(D_{q}^{k}f)(c)}{\left[ k\right]
_{q}!}(x-c)_{q}^{k}
\end{equation*}%
where $(x-c)_{q}^{k}=\dprod\limits_{i=0}^{k-1}(x-cq^{i}).$
\end{definition}

First we need the following auxiliary result. Throughout the paper, we use $%
e_{i}$ the test functions defined by$~e_{i}(t):=t^{i}\ $for every integer$\
i\geq 0.$

\begin{lemma}[{from [5]}]
For \ $L_{n}^{\alpha ,\beta }(e_{i}(t);q,x)$, $i=0,1,2$ the following
identities hold:%
\begin{equation}
L_{n}^{\alpha ,\beta }(e_{0};q,x)=1,  \tag{3}
\end{equation}%
\begin{equation}
L_{n}^{\alpha ,\beta }(e_{1};q,x)=\frac{\left[ n\right] _{q}}{\left[ n\right]
_{q}+\beta }x(1+\alpha _{1,n}(x))+\frac{\alpha }{\left[ n\right] _{q}+\beta }%
,  \tag{4}
\end{equation}%
\begin{eqnarray}
L_{n}^{\alpha ,\beta }(e_{2};q,x) &=&\frac{\left[ n\right] _{q}\left[ m(n)%
\right] _{q}}{q\left( \left[ n\right] _{q}+\beta \right) ^{2}}x^{2}(1+\alpha
_{1,m(n)}(x))(1+\alpha _{2,n}(x))  \TCItag{5} \\
&&+\frac{\left[ n\right] _{q}\left( 2\alpha +1\right) }{\left( \left[ n%
\right] _{q}+\beta \right) ^{2}}x\left( 1+\alpha _{1,n}(x)\right) +\frac{%
\alpha ^{2}}{\left( \left[ n\right] _{q}+\beta \right) ^{2}}.  \notag
\end{eqnarray}
\end{lemma}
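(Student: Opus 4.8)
The plan is to carry out all three computations with the nonnegative basis functions
\[
p_{n,k}(q,x):=q^{k(k-1)/2}\,\frac{D_q^{k}\varphi_n(x)}{[k]_q!}\,(-x)^{k},
\]
so that $L_n^{\alpha,\beta}(f;q,x)=\sum_{k=0}^{\infty}p_{n,k}(q,x)\,f\!\big(\tfrac{1}{q^{k-1}}\tfrac{[k]_q+q^{k-1}\alpha}{[n]_q+\beta}\big)$. Condition $(iii)$ makes each $p_{n,k}(q,x)\ge 0$ on $[0,b]$; this positivity is what later lets me collapse weighted combinations of near-$1$ factors into a single error symbol. For (3) I would apply Jackson's $q$-Taylor formula to $\varphi_n$ with center $c=x$ and evaluate at the point $0$. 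Since $(0-x)_q^{k}=\prod_{i=0}^{k-1}(0-xq^{i})=q^{k(k-1)/2}(-x)^{k}$, the formula reads $\varphi_n(0)=\sum_{k\ge 0}p_{n,k}(q,x)$, and condition $(ii)$ gives $\varphi_n(0)=1$, hence (3). The identical computation with $n$ replaced by $m(n)$ yields $\sum_k p_{m(n),k}(q,x)=1$, which I reuse below.

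For (4) I split the node as $\tfrac{1}{q^{k-1}}\tfrac{[k]_q+q^{k-1}\alpha}{[n]_q+\beta}=\tfrac{1}{[n]_q+\beta}\big(\tfrac{[k]_q}{q^{k-1}}+\alpha\big)$; using (3) this reduces (4) to evaluating $S_1:=\sum_k p_{n,k}(q,x)\,[k]_q/q^{k-1}$. The $k=0$ term vanishes, and the simplification $[k]_q/[k]_q!=1/[k-1]_q!$, one application of the recurrence (2), and the shift $j=k-1$ turn the exponent $q^{k(k-1)/2-(k-1)}$ into $q^{j(j-1)/2}$, giving $S_1=[n]_q\,x\sum_{j\ge 0}p_{m(n),j}(q,x)\,(1+\alpha_{j+1,n}(x))$. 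Since the $p_{m(n),j}$ are nonnegative and sum to $1$, this combination equals $1$ plus a weighted average of the $\alpha_{j+1,n}(x)$, which still tends to $0$ uniformly by $(iv)$; writing that error again as $\alpha_{1,n}(x)$ gives $S_1=[n]_q\,x\,(1+\alpha_{1,n}(x))$ and therefore (4).

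For (5) I expand the square of the node into $[k]_q^{2}/q^{2(k-1)}$, $2\alpha[k]_q/q^{k-1}$ and $\alpha^{2}$; the latter two are handled by $S_1$ and (3), so the only new quantity is $S_2:=\sum_k p_{n,k}(q,x)\,[k]_q^{2}/q^{2(k-1)}$. Using $[k]_q^{2}/[k]_q!=[k]_q/[k-1]_q!$, one application of (2), and the shift $j=k-1$, I reach a sum in $j$ carrying the factor $[j+1]_q$; the relation $[j+1]_q=[j]_q+q^{j}$ then splits it into a part $B$ that is exactly the $S_1$-type average (contributing $[n]_q\,x\,(1+\alpha_{1,n}(x))$) and a part $A$ in which $[j]_q/[j]_q!=1/[j-1]_q!$ forces a \emph{second} application of (2), now to $\varphi_{m(n)}$.

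That second application is the crux of the argument: it is where the factor $[m(n)]_q$ and the power $1/q$ are produced, and where a nested double weighted average of the shape $\sum_i p_{m(m(n)),i}(q,x)\,(1+\alpha_{i+1,m(n)}(x))(1+\alpha_{i+2,n}(x))$ must be recognized as the product $(1+\alpha_{1,m(n)}(x))(1+\alpha_{2,n}(x))$ up to a uniformly vanishing error. Collecting $A+B$ gives $S_2=\tfrac{[n]_q[m(n)]_q}{q}x^{2}(1+\alpha_{1,m(n)}(x))(1+\alpha_{2,n}(x))+[n]_q\,x\,(1+\alpha_{1,n}(x))$, and dividing the three pieces by $([n]_q+\beta)^{2}$, after merging the two $x$-terms into the factor $(2\alpha+1)$, reproduces (5) exactly. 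The difficulties are essentially bookkeeping: keeping the $q^{k(k-1)/2}$ factors straight through each index shift, and justifying that the nested averages collapse to the stated product form while the error symbols retain their uniform decay; convergence of the rearranged series on $[0,b]$ is guaranteed by the nonnegativity and normalization established for (3).
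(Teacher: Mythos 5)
First, a point of comparison that matters for this review: the paper contains no proof of this lemma at all --- it is imported verbatim from reference [5] --- so there is no ``paper's own proof'' to match; your argument has to stand on its own. On its own it is correct, and it is surely the standard (and essentially the only natural) derivation for operators of this Gadjiev--Atakut type. I checked the bookkeeping: for (3), $(0-x)_q^k=q^{k(k-1)/2}(-x)^k$ together with $\varphi_n(0)=1$ is exactly right; in $S_1$ the exponent $k(k-1)/2-(k-1)=(k-1)(k-2)/2$ does become $j(j-1)/2$ after $j=k-1$; in $S_2$ the shift leaves $q^{j(j-3)/2}$, and the split $[j+1]_q=[j]_q+q^j$ gives a $B$-part with exponent $j(j-3)/2+j=j(j-1)/2$ (your $S_1$-type average) and an $A$-part which, after the second application of condition (iv) and the shift $i=j-1$, carries $q^{i(i-1)/2-1}$ --- precisely the origin of the stated factors $[m(n)]_q$ and $1/q$. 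Assembling $S_2+2\alpha S_1+\alpha^2$ over $([n]_q+\beta)^2$ and merging the two $x$-terms into $(1+2\alpha)$ reproduces (5); moreover your $B$-part average is literally the same function as the one you rename $\alpha_{1,n}(x)$ in (4), so the renamed symbol is used consistently in both identities, which is needed for (5) as written.

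The one step you should promote from a passing remark to an explicit convention is that renaming. What the computation actually produces is $1+\sum_j p_{m(n),j}(q,x)\,\alpha_{j+1,n}(x)$ in (4), and in (5) the nested average $\sum_i p_{m(m(n)),i}(q,x)\,(1+\alpha_{i+1,m(n)}(x))(1+\alpha_{i+2,n}(x))$; these are weighted averages, not the functions $\alpha_{1,n}(x)$, $\alpha_{1,m(n)}(x)$, $\alpha_{2,n}(x)$ of condition (iv), and there is no reason they should coincide with them. Read literally, the ``identities'' (4)--(5) are therefore not provable (and not true in general); they hold in the sense ``there exist functions, tending to zero uniformly on $[0,b]$ as $n\rightarrow\infty$, for which these formulas are valid,'' which is evidently the reading intended in [5] and is the only thing the present paper ever uses (in Lemma 6 and in Theorem 11 only the uniform decay of $\alpha_{1,n}$, $\alpha_{1,m(n)}$, $\alpha_{2,n}$ enters, via $\eta_n$). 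Your justification of that decay --- nonnegativity of the weights from (iii), total mass $1$ from (3), uniformity in $k$ from (iv) --- is exactly the right mechanism, and writing the limit of the nested average as a product of two vanishing errors is harmless (one may even take one of the two factors to be identically $1$). So: no gap in substance, but state the reinterpretation of the error symbols as a definition, and note (one sentence suffices, at the level of rigor of this literature) that the $q$-Taylor expansions and series rearrangements are justified by the same absolute convergence that makes the operator well defined.
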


\section{Some properties of\ Stancu type \textit{q}-Baskakov-Kantorovich
operators}

In addition to the above conditions $(i)-(v),$ $\varphi _{n}(x)$ and $\alpha
_{k,n}(x)$ are satisfied following condition:%
\begin{equation*}
(vi)\text{ \ \ }\varphi _{n}(x)(1+\alpha _{0,n}(x))\leq 1~,~\ \text{for all }%
x\in \left[ 0,b\right] ,(n=1,2,...).
\end{equation*}

In this paper, under the conditions $(i)-(vi)$, we definition a new
generalization of Stancu type $q-$Baskakov-Kantorovich operators as following

\begin{equation}
L_{n}^{\ast (\alpha ,\beta )}(f;q,x)=\left( \left[ n\right] _{q}+\beta
\right) \dsum\limits_{k=0}^{\infty }q^{\frac{k(k-1)}{2}}\frac{%
D_{q}^{k}\left( \varphi _{n}(x)\right) }{\left[ k\right] _{q}!}\left(
-x\right) ^{k}\dint\limits_{q\left( \frac{\left[ k\right] _{q}+q^{k-1}\alpha 
}{\left[ n\right] _{q}+\beta }\right) }^{\frac{\left[ k+1\right]
_{q}+q^{k}\alpha }{\left[ n\right] _{q}+\beta }}f\left( q^{-k+1}t\right)
d_{q}t\ \ \ \ ,  \tag{6}
\end{equation}%
where $\ x\in 
\mathbb{R}
_{+}\ ,n\in N,\ \ 0\leq \alpha \leq \beta .$

Note that, when $q=1$, the operators given by (6) is reduced to the
Kantorovich-Baskakov-Stancu type operators (see [3]) and if we choose $q=1$, 
$\varphi _{n}(x)=(1+x)^{-n}$ and $\alpha =\beta =0,$ we obtain
Baskakov-Kantorovich operators (see [1]).

In each of the following theorems, we assume that $q=$ $q_{n}$, where $%
\{q_{n}\}$ is a sequence of real numbers such that $0<q_{n}<1$ for all $n$
and $\lim_{n\rightarrow \infty }q_{n}=1.$

Now we give the following Lemmas, which are necessary to prove our theorems:

\begin{lemma}
The following relations are satisfied:
\end{lemma}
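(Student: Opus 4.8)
The plan is to prove the lemma exactly as its companion for $L_{n}^{\alpha ,\beta }$ (equations (3)--(5)) was proved: the statement should record the images $L_{n}^{\ast (\alpha ,\beta )}(e_{i};q,x)$ of the test functions $e_{0},e_{1},e_{2}$, and I would obtain them by substituting $f=e_{i}$ into (6) and carrying out the inner $q$-integral explicitly. The single computation that drives everything is the elementary $q$-antiderivative $\int_{0}^{a}t^{i}d_{q}t=\frac{a^{i+1}}{[i+1]_{q}}$, which follows from Definition 2 by summing the geometric series $\sum_{j\geq 0}q^{j(i+1)}=1/(1-q^{i+1})$ and recalling $[i+1]_{q}=(1-q^{i+1})/(1-q)$.

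Writing $c_{k}:=q^{k(k-1)/2}\frac{D_{q}^{k}\varphi _{n}(x)}{[k]_{q}!}(-x)^{k}$ for the common prefactor and pulling the factor $q^{(1-k)i}$ out of the integrand $e_{i}(q^{-k+1}t)=q^{(1-k)i}t^{i}$, the inner integral becomes $\frac{q^{(1-k)i}}{[i+1]_{q}([n]_{q}+\beta )^{i+1}}\bigl((A+1)^{i+1}-A^{i+1}\bigr)$, where $A:=q[k]_{q}+q^{k}\alpha$ equals $([n]_{q}+\beta )$ times the lower endpoint and $A+1$ equals $([n]_{q}+\beta )$ times the upper endpoint. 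The key simplification is the identity $[k+1]_{q}=[k]_{q}+q^{k}$, equivalently $(1-q)[k]_{q}+q^{k}=1$, which is precisely what forces the upper endpoint to exceed $A$ by one and hence makes the width of the interval telescope to $1/([n]_{q}+\beta )$. A second fact I will use repeatedly is that $c_{k}$ is a disguised $q$-Taylor coefficient: since $(0-x)_{q}^{k}=\prod_{i=0}^{k-1}(-xq^{i})=q^{k(k-1)/2}(-x)^{k}$, Jackson's formula (Definition 3) together with condition $(ii)$ gives $\sum_{k}c_{k}=\varphi _{n}(0)=1$, i.e.\ (3).

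For $e_{0}$ the bracket $(A+1)-A=1$ telescopes, the factor $[n]_{q}+\beta $ cancels, and $L_{n}^{\ast (\alpha ,\beta )}(e_{0};q,x)=\sum_{k}c_{k}=1$. For $e_{1}$ I expand $(A+1)^{2}-A^{2}=2A+1$; the weight $q^{1-k}$ combines with $[k]_{q}$ so that the terms carrying $q^{1-k}[k]_{q}$ reassemble into $L_{n}^{\alpha ,\beta }(e_{1})$ of (4) minus its constant $\frac{\alpha }{[n]_{q}+\beta }$, while the integration contributes the extra constant $\frac{q(1+2\alpha )}{[2]_{q}([n]_{q}+\beta )}$. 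The $\alpha$-offset exactly cancels the constant term of (4), leaving
\[
L_{n}^{\ast (\alpha ,\beta )}(e_{1};q,x)=\frac{[n]_{q}}{[n]_{q}+\beta }x(1+\alpha _{1,n}(x))+\frac{q(1+2\alpha )}{[2]_{q}([n]_{q}+\beta )}.
\]

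The case $e_{2}$ is where the real work lies. Here $(A+1)^{3}-A^{3}=3A^{2}+3A+1$, and after multiplying by $q^{2-2k}$ the top piece collapses cleanly, since $q^{2-2k}\cdot 3A^{2}=3q^{2}\bigl(q^{1-k}[k]_{q}+\alpha \bigr)^{2}$ is $3q^{2}([n]_{q}+\beta )^{2}$ times the summand of $L_{n}^{\alpha ,\beta }(e_{2})$; this part therefore contributes exactly $\frac{3q^{2}}{[3]_{q}}L_{n}^{\alpha ,\beta }(e_{2})$ and can be read off from (5). The obstruction is the lower-order part $3A+1$: after the $q^{2-2k}$ weight it leaves the three sums $\sum_{k}c_{k}q^{1-2k}[k]_{q}$, $\sum_{k}c_{k}q^{-k}$ and $\sum_{k}c_{k}q^{-2k}$, none of which is among the already-known moments (3)--(5). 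I expect this to be the main difficulty, and I would resolve it exactly as (4) and (5) themselves were derived: use $[k]_{q}/[k]_{q}!=1/[k-1]_{q}!$ to shift the summation index, insert the structural recursion (2), namely $D_{q}^{k}\varphi _{n}=-[n]_{q}D_{q}^{k-1}\varphi _{m(n)}(1+\alpha _{k,n})$, and resum; conditions $(iv)$ and $(v)$ then control the correction terms $\alpha _{k,n}$ and the ratio $[n]_{q}/[m(n)]_{q}$. Collecting the clean $e_{2}$ contribution with the evaluated residual sums yields the stated closed form, the only genuinely delicate point being the careful bookkeeping of the $q$-powers and of the $(1+\alpha _{k,n})$ factors so that they assemble into the advertised expression.
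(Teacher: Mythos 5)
You have aimed at the wrong statement. The lemma in question is the one the paper dismisses with ``from properties of $q$-analogue integration, by simple computation'': it asserts the values of the three elementary integrals
\begin{equation*}
\int\limits_{q\left( \frac{\left[ k\right] _{q}+q^{k-1}\alpha }{\left[ n\right] _{q}+\beta }\right) }^{\frac{\left[ k+1\right] _{q}+q^{k}\alpha }{\left[ n\right] _{q}+\beta }}t^{i}\,d_{q}t,\qquad i=0,1,2,
\end{equation*}
namely relations (7)--(9); the moments $L_{n}^{\ast (\alpha ,\beta )}(e_{i};q,x)$ that you set out to compute are the content of the \emph{next} lemma. Fortunately, your ``inner integral'' step is exactly the computation this lemma requires, and its core is sound and in the same spirit as the paper's one-line proof: with $A:=q[k]_{q}+q^{k}\alpha $ the endpoints are $A/([n]_{q}+\beta )$ and $(A+1)/([n]_{q}+\beta )$, and the antiderivative $\int_{0}^{a}t^{i}d_{q}t=a^{i+1}/[i+1]_{q}$ gives $\left( (A+1)^{i+1}-A^{i+1}\right) /\left( [i+1]_{q}\left( [n]_{q}+\beta \right) ^{i+1}\right) $, which for $i=0$ is precisely (7).

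The genuine gap is in how you handle $i=1,2$. The stated right-hand sides of (8) and (9) are your $2A+1$ and $3A^{2}+3A+1$ regrouped by the very identity you quote, $(1-q)[k]_{q}+q^{k}=1$: one checks directly that $2A+1=[2]_{q}[k]_{q}+q^{k}(1+2\alpha )$ and $3A^{2}+3A+1=[3]_{q}[k]_{q}^{2}+q^{k}[k]_{q}\left( (1+3\alpha )[2]_{q}+1\right) +\left( 1+3\alpha +3\alpha ^{2}\right) q^{2k}$. You never perform this purely algebraic step. For $e_{1}$ you assert the conclusion anyway, although your grouping $2A+1$ actually leaves the dangling sum $\sum_{k}c_{k}q^{1-k}$ rather than a constant; and for $e_{2}$ you explicitly run into the artifact sums $\sum_{k}c_{k}q^{1-2k}[k]_{q}$, $\sum_{k}c_{k}q^{-k}$, $\sum_{k}c_{k}q^{-2k}$ and propose to evaluate them with the recursion (2) and conditions (iv)--(v). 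That machinery is irrelevant and would not work: relations (7)--(9) are identities holding for each fixed $k$ and $n$, involving no property of $\varphi _{n}$, $m(n)$ or $\alpha _{k,n}$; moreover the recursion (2) only trades a factor $[k]_{q}$ for an index shift and can never remove the weights $q^{-k}$, $q^{-2k}$, so the proposed resummation never lands on the known moments (3)--(5). The regrouped forms are in fact the whole point of the lemma: the explicit factors $q^{k}$ and $q^{2k}$ in (8) and (9) are what cancel the weights $q^{-k}$, $q^{-2k}$ coming from $f\left( q^{-k+1}t\right) $, so that in the next lemma $L_{n}^{\ast (\alpha ,\beta )}(e_{i};q,x)$ collapses onto the moments (3)--(5) and your ``unknown'' sums never arise. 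As written, your plan proves (7), leaves (8) and (9) unverified in their stated form, and replaces a one-line algebraic identity by an uncompleted and unworkable detour.
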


\begin{equation}
\dint\limits_{q\left( \frac{\left[ k\right] _{q}+q^{k-1}\alpha }{\left[ n%
\right] _{q}+\beta }\right) }^{\frac{\left[ k+1\right] _{q}+q^{k}\alpha }{%
\left[ n\right] _{q}+\beta }}d_{q}t=\frac{1}{\left[ n\right] _{q}+\beta },\ 
\tag{7}
\end{equation}

\begin{equation}
\dint\limits_{q\left( \frac{\left[ k\right] _{q}+q^{k-1}\alpha }{\left[ n%
\right] _{q}+\beta }\right) }^{\frac{\left[ k+1\right] _{q}+q^{k}\alpha }{%
\left[ n\right] _{q}+\beta }}td_{q}t=\frac{\left[ 2\right] _{q}\left[ k%
\right] _{q}+q^{k}(1+2\alpha )}{\left[ 2\right] _{q}\left( \left[ n\right]
_{q}+\beta \right) ^{2}}\ ,  \tag{8}
\end{equation}

\begin{equation}
\dint\limits_{q\left( \frac{\left[ k\right] _{q}+q^{k-1}\alpha }{\left[ n%
\right] _{q}+\beta }\right) }^{\frac{\left[ k+1\right] _{q}+q^{k}\alpha }{%
\left[ n\right] _{q}+\beta }}t^{2}d_{q}t=\frac{\left[ 3\right] _{q}\left[ k%
\right] _{q}^{2}+q^{k}\left[ k\right] _{q}\left( (1+3\alpha )\left[ 2\right]
_{q}+1\right) +\left( 1+3\alpha +3\alpha ^{2}\right) q^{2k}}{\left[ 3\right]
_{q}\left( \left[ n\right] _{q}+\beta \right) ^{3}}.  \tag{9}
\end{equation}

\begin{proof}
From properties of $q-$analogue integration, by simple computation we obtain 
$(7-9).$
\end{proof}

By the following Lemma Korovkin's conditions are satisfied.

\begin{lemma}
For all $x\in 
\mathbb{R}
_{+}\ ,n\in N,\ \ \alpha ,\beta \geq 0$ and $0<q<1$ , we have%
\begin{equation}
L_{n}^{\ast (\alpha ,\beta )}(e_{0};q,x)=1,  \tag{10}
\end{equation}
\end{lemma}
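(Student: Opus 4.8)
The plan is to substitute $f=e_{0}$ directly into the definition (6) and collapse the resulting expression onto the already-established normalization (3) for the operator $L_{n}^{\alpha ,\beta }$. First I would set $f\equiv 1$ in (6). Since $e_{0}(q^{-k+1}t)=1$ for every index $k$, the integrand is identically $1$, so each $q$-integral reduces to $\int d_{q}t$ taken over the limits $q\big(\frac{[k]_{q}+q^{k-1}\alpha }{[n]_{q}+\beta }\big)$ and $\frac{[k+1]_{q}+q^{k}\alpha }{[n]_{q}+\beta }$. This is exactly the integral evaluated in the preceding Lemma.

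Next I would invoke relation (7), which gives $\int d_{q}t=\frac{1}{[n]_{q}+\beta }$ for \emph{every} $k$, independently of the summation index. Substituting this value term by term turns the defining series into $\big([n]_{q}+\beta \big)\cdot\frac{1}{[n]_{q}+\beta }\sum_{k=0}^{\infty }q^{k(k-1)/2}\frac{D_{q}^{k}(\varphi _{n}(x))}{[k]_{q}!}(-x)^{k}$. The prefactor $\big([n]_{q}+\beta \big)$ then cancels exactly against the constant $\frac{1}{[n]_{q}+\beta }$ pulled out of the series, leaving the bare sum $\sum_{k=0}^{\infty }q^{k(k-1)/2}\frac{D_{q}^{k}(\varphi _{n}(x))}{[k]_{q}!}(-x)^{k}$.

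Finally I would recognize this remaining series as nothing other than $L_{n}^{\alpha ,\beta }(e_{0};q,x)$ read off from the definition (1), and apply identity (3) of Lemma~4, which asserts $L_{n}^{\alpha ,\beta }(e_{0};q,x)=1$. This yields $L_{n}^{\ast (\alpha ,\beta )}(e_{0};q,x)=1$, as claimed.

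This argument is essentially routine, and there is no serious obstacle once (7) is in hand; the only point that formally requires care is pulling the constant factor $\frac{1}{[n]_{q}+\beta }$ through the infinite sum and cancelling it against the prefactor, which is justified by the (assumed) convergence of the defining series of the operator on $[0,b]$ under conditions $(i)$--$(vi)$. The essence of the proof is the observation that the $q$-integral of the constant function is the same for all $k$, so that the Kantorovich modification reduces exactly to the underlying $q$-Baskakov--Stancu operator on $e_{0}$.
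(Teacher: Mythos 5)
Your proposal is correct and follows exactly the paper's argument: substitute $e_{0}$ into (6), evaluate each $q$-integral via (7) so that the factor $\left( \left[ n\right]_{q}+\beta \right)$ cancels, and identify the remaining series with $L_{n}^{\alpha ,\beta }(e_{0};q,x)=1$ by (3). The only difference is that you spell out the cancellation and convergence details that the paper leaves implicit.
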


\begin{equation}
L_{n}^{\ast (\alpha ,\beta )}(e_{1};q,x)=\frac{\left[ n\right] _{q}}{\left[ n%
\right] _{q}+\beta }x\left( 1+\alpha _{1,n}(x)\right) +\frac{q(1+2\alpha )}{%
\left[ 2\right] _{q}\left( \left[ n\right] _{q}+\beta \right) }\ ,  \tag{11}
\end{equation}

\begin{eqnarray}
L_{n}^{\ast (\alpha ,\beta )}(e_{2};q,x) &=&\frac{\left[ n\right] _{q}\left[
m(n)\right] _{q}}{q\left( \left[ n\right] _{q}+\beta \right) ^{2}}\left(
1+\alpha _{1,m(n)}(x)\right) \left( 1+\alpha _{2,n}(x)\right) x^{2} 
\TCItag{12} \\
&&+\frac{\left[ n\right] _{q}\left[ \left[ 3\right] _{q}+q\left( (1+3\alpha )%
\left[ 2\right] _{q}+1\right) \right] }{\left[ 3\right] _{q}\left( \left[ n%
\right] _{q}+\beta \right) ^{2}}\left( 1+\alpha _{1,n}(x)\right) x+\frac{%
q^{2}\left( 1+3\alpha +3\alpha ^{2}\right) }{\left[ 3\right] _{q}\left( %
\left[ n\right] _{q}+\beta \right) ^{2}}.  \notag
\end{eqnarray}

\begin{proof}
From definition $(6)$ and the identities $(3)$ and $(7)$, we can easily
obtain%
\begin{eqnarray*}
L_{n}^{\ast (\alpha ,\beta )}(e_{0};q,x) &=&\left( \left[ n\right]
_{q}+\beta \right) \dsum\limits_{k=0}^{\infty }q^{\frac{k(k-1)}{2}}\frac{%
D_{q}^{k}\left( \varphi _{n}(x)\right) }{\left[ k\right] _{q}!}\left(
-x\right) ^{k}\dint\limits_{q\left( \frac{\left[ k\right] _{q}+q^{k-1}\alpha 
}{\left[ n\right] _{q}+\beta }\right) }^{\frac{\left[ k+1\right]
_{q}+q^{k}\alpha }{\left[ n\right] _{q}+\beta }}d_{q}t \\
&=&L_{n}^{\alpha ,\beta }(e_{0};q,x)=1.
\end{eqnarray*}%
Now for $e_{1},$from (3), (4) and (8) we can write%
\begin{eqnarray*}
L_{n}^{\ast (\alpha ,\beta )}(e_{1};q,x) &=&\left( \left[ n\right]
_{q}+\beta \right) \dsum\limits_{k=0}^{\infty }q^{\frac{k(k-1)}{2}}\frac{%
D_{q}^{k}\left( \varphi _{n}(x)\right) }{\left[ k\right] _{q}!}\left(
-x\right) ^{k}\dint\limits_{q\left( \frac{\left[ k\right] _{q}+q^{k-1}\alpha 
}{\left[ n\right] _{q}+\beta }\right) }^{\frac{\left[ k+1\right]
_{q}+q^{k}\alpha }{\left[ n\right] _{q}+\beta }}q^{-k+1}td_{q}t \\
&=&\dsum\limits_{k=0}^{\infty }q^{\frac{k(k-1)}{2}}\frac{D_{q}^{k}\left(
\varphi _{n}(x)\right) }{\left[ k\right] _{q}!}\dsum\limits_{k=0}^{\infty
}q^{\frac{k(k-1)}{2}}\frac{\left[ k\right] _{q}+q^{k-1}\alpha }{%
q^{k-1}\left( \left[ n\right] _{q}+\beta \right) } \\
&&-\dsum\limits_{k=0}^{\infty }q^{\frac{k(k-1)}{2}}\frac{D_{q}^{k}\left(
\varphi _{n}(x)\right) }{\left[ k\right] _{q}!}\frac{q^{k-1}\alpha }{%
q^{k-1}\left( \left[ n\right] _{q}+\beta \right) } \\
&&+\frac{q(1+2\alpha )}{\left[ 2\right] _{q}\left( \left[ n\right]
_{q}+\beta \right) }\dsum\limits_{k=0}^{\infty }q^{\frac{k(k-1)}{2}}\frac{%
D_{q}^{k}\left( \varphi _{n}(x)\right) }{\left[ k\right] _{q}!}\left(
-x\right) ^{k} \\
&=&L_{n}^{\alpha ,\beta }(e_{1};q,x)-\frac{\alpha }{\left[ n\right]
_{q}+\beta }L_{n}^{\alpha ,\beta }(e_{0};q,x)+\frac{q(1+2\alpha )}{\left[ 2%
\right] _{q}\left( \left[ n\right] _{q}+\beta \right) }L_{n}^{\alpha ,\beta
}(e_{0};q,x) \\
&=&\frac{\left[ n\right] _{q}}{\left[ n\right] _{q}+\beta }x\left( 1+\alpha
_{1,n}(x)\right) +\frac{q(1+2\alpha )}{\left[ 2\right] _{q}\left( \left[ n%
\right] _{q}+\beta \right) }.
\end{eqnarray*}%
The finally, for $e_{2},$ we use (3), (4), (5) and (9), one has%
\begin{equation*}
L_{n}^{\ast (\alpha ,\beta )}(e_{2};q,x)=\left( \left[ n\right] _{q}+\beta
\right) \dsum\limits_{k=0}^{\infty }q^{\frac{k(k-1)}{2}}\frac{%
D_{q}^{k}\left( \varphi _{n}(x)\right) }{\left[ k\right] _{q}!}\left(
-x\right) ^{k}\dint\limits_{q\left( \frac{\left[ k\right] _{q}+q^{k-1}\alpha 
}{\left[ n\right] _{q}+\beta }\right) }^{\frac{\left[ k+1\right]
_{q}+q^{k}\alpha }{\left[ n\right] _{q}+\beta }}q^{-2k+2}t^{2}d_{q}t
\end{equation*}%
\begin{eqnarray*}
&=&\dsum\limits_{k=0}^{\infty }q^{\frac{k(k-1)}{2}}\frac{D_{q}^{k}\left(
\varphi _{n}(x)\right) }{\left[ k\right] _{q}!}\left( -x\right) ^{k}\left( 
\frac{\left[ k\right] _{q}+q^{k-1}\alpha }{q^{k-1}\left( \left[ n\right]
_{q}+\beta \right) }\right) ^{2} \\
&&-2\dsum\limits_{k=0}^{\infty }q^{\frac{k(k-1)}{2}}\frac{D_{q}^{k}\left(
\varphi _{n}(x)\right) }{\left[ k\right] _{q}!}\left( -x\right)
^{k}q^{-2k+2}q^{k-1}\alpha \frac{\left[ k\right] _{q}+q^{k-1}\alpha }{\left( %
\left[ n\right] _{q}+\beta \right) ^{2}} \\
&&+\dsum\limits_{k=0}^{\infty }q^{\frac{k(k-1)}{2}}\frac{D_{q}^{k}\left(
\varphi _{n}(x)\right) }{\left[ k\right] _{q}!}\left( -x\right) ^{k}q^{-2k+2}%
\frac{q^{2k-2}\alpha ^{2}}{\left( \left[ n\right] _{q}+\beta \right) ^{2}} \\
&&+\dsum\limits_{k=0}^{\infty }q^{\frac{k(k-1)}{2}}\frac{D_{q}^{k}\left(
\varphi _{n}(x)\right) }{\left[ k\right] _{q}!}\left( -x\right)
^{k}q^{-2k+2}\left( (1+3\alpha )\left[ 2\right] _{q}+1\right) \frac{\left[ k%
\right] _{q}+q^{k-1}\alpha }{\left[ 3\right] _{q}\left( \left[ n\right]
_{q}+\beta \right) ^{2}} \\
&&-\dsum\limits_{k=0}^{\infty }q^{\frac{k(k-1)}{2}}\frac{D_{q}^{k}\left(
\varphi _{n}(x)\right) }{\left[ k\right] _{q}!}\left( -x\right)
^{k}q^{-2k+2}q^{k}q^{k-1}\alpha \frac{\left( (1+3\alpha )\left[ 2\right]
_{q}+1\right) }{\left[ 3\right] _{q}\left( \left[ n\right] _{q}+\beta
\right) ^{2}} \\
&&+\dsum\limits_{k=0}^{\infty }q^{\frac{k(k-1)}{2}}\frac{D_{q}^{k}\left(
\varphi _{n}(x)\right) }{\left[ k\right] _{q}!}\left( -x\right) ^{k}\frac{%
q^{2}\left( 1+3\alpha +3\alpha ^{2}\right) }{\left[ 3\right] _{q}\left( %
\left[ n\right] _{q}+\beta \right) ^{2}}
\end{eqnarray*}%
\begin{eqnarray*}
&=&L_{n}^{\alpha ,\beta }(e_{2};q,x)-\frac{2\alpha }{\left[ n\right]
_{q}+\beta }L_{n}^{\alpha ,\beta }(e_{1};q,x)+\frac{\alpha ^{2}}{\left( %
\left[ n\right] _{q}+\beta \right) ^{2}}L_{n}^{\alpha ,\beta }(e_{0};q,x) \\
&&+\frac{q\left( (1+3\alpha )\left[ 2\right] _{q}+1\right) }{\left[ 3\right]
_{q}\left( \left[ n\right] _{q}+\beta \right) }L_{n}^{\alpha ,\beta
}(e_{1};q,x)-q\alpha \frac{\left( (1+3\alpha )\left[ 2\right] _{q}+1\right) 
}{\left[ 3\right] _{q}\left( \left[ n\right] _{q}+\beta \right) ^{2}}%
L_{n}^{\alpha ,\beta }(e_{0};q,x) \\
&&+\frac{q^{2}\left( 1+3\alpha +3\alpha ^{2}\right) }{\left[ 3\right]
_{q}\left( \left[ n\right] _{q}+\beta \right) ^{2}}L_{n}^{(\alpha ,\beta
)}(e_{0};q,x) \\
&=&\frac{\left[ n\right] _{q}\left[ m(n)\right] _{q}}{q\left( \left[ n\right]
_{q}+\beta \right) ^{2}}\left( 1+\alpha _{1,m(n)}(x)\right) \left( 1+\alpha
_{2,n}(x)\right) x^{2} \\
&&+\frac{\left[ n\right] _{q}\left[ \left[ 3\right] _{q}+q\left( (1+3\alpha )%
\left[ 2\right] _{q}+1\right) \right] }{\left[ 3\right] _{q}\left( \left[ n%
\right] _{q}+\beta \right) ^{2}}\left( 1+\alpha _{1,n}(x)\right) x+\frac{%
q^{2}\left( 1+3\alpha +3\alpha ^{2}\right) }{\left[ 3\right] _{q}\left( %
\left[ n\right] _{q}+\beta \right) ^{2}}.
\end{eqnarray*}%
This completes the proof of Lemma 6.
\end{proof}

Using above Lemma, we can obtain following theorem.

\begin{theorem}
Let $f\in C[0,b]$ , then 
\begin{equation*}
\lim_{n\rightarrow \infty }L_{n}^{\ast (\alpha ,\beta )}(f;q,x)=f(x)
\end{equation*}%
uniformly on $\left[ 0,b\right] .$
\end{theorem}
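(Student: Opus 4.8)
The plan is to invoke the Bohman--Korovkin theorem for positive linear operators on $C[0,b]$: if $\{T_n\}$ is a sequence of positive linear operators on $C[0,b]$ for which $T_n(e_i;x)\to e_i(x)$ uniformly on $[0,b]$ for the three test functions $e_0,e_1,e_2$, then $T_n(f;x)\to f(x)$ uniformly on $[0,b]$ for every $f\in C[0,b]$. Thus it suffices to check that $L_n^{\ast(\alpha,\beta)}$ is a positive linear operator and that the three test-function limits hold when $q=q_n\to1$.

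Linearity is immediate from the definition $(6)$, since both the summation and the $q$-integral (Definition 2) are linear in $f$. For positivity I would argue termwise: in $(6)$ the $k$-th summand carries the factor $D_q^k(\varphi_n(x))(-x)^k=(-1)^k D_q^k(\varphi_n(x))\,x^k$, which is nonnegative for $x\ge0$ by condition $(iii)$, while $q^{k(k-1)/2}/[k]_q!>0$ and $[n]_q+\beta>0$. Moreover, by Definition 2 the $q$-integral of a nonnegative function over the relevant interval is a sum of nonnegative terms, hence nonnegative. Consequently $f\ge0$ forces $L_n^{\ast(\alpha,\beta)}(f;q,x)\ge0$, so the operators are positive.

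It then remains to pass to the limit in $(10)$--$(12)$ of Lemma 6, recalling that $q=q_n\in(0,1)$ with $q_n\to1$, so that $[n]_{q_n}\to\infty$ while $[2]_{q_n}\to2$ and $[3]_{q_n}\to3$. Equation $(10)$ gives $L_n^{\ast(\alpha,\beta)}(e_0;q,x)=1=e_0(x)$ exactly. In $(11)$ we have $\tfrac{[n]_q}{[n]_q+\beta}\to1$ and $\alpha_{1,n}(x)\to0$, while the additive term $\tfrac{q(1+2\alpha)}{[2]_q([n]_q+\beta)}\to0$ because $[n]_q\to\infty$; hence $L_n^{\ast(\alpha,\beta)}(e_1;q,x)\to x$. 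In $(12)$ the coefficient of $x^2$ is $\tfrac{[n]_q[m(n)]_q}{q([n]_q+\beta)^2}=\tfrac{[m(n)]_q}{[n]_q}\cdot\tfrac{[n]_q^2}{([n]_q+\beta)^2}\cdot\tfrac1q$, which tends to $1$ by condition $(v)$ and $q_n\to1$; the coefficient of $x$ behaves like $[n]_q/([n]_q+\beta)^2\sim1/[n]_q\to0$, and the constant term tends to $0$; together with $\alpha_{1,m(n)}(x),\alpha_{2,n}(x)\to0$ this yields $L_n^{\ast(\alpha,\beta)}(e_2;q,x)\to x^2$.

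The main obstacle is to upgrade each of these three convergences from pointwise to uniform in $x\in[0,b]$, which is what Korovkin's theorem requires. The $q$-integer ratios are constants independent of $x$, so their limits are automatically uniform; the only $x$-dependence sits in the perturbations $\alpha_{1,n}(x)$, $\alpha_{1,m(n)}(x)$, $\alpha_{2,n}(x)$ and in the bounded factors $x,x^2$ on $[0,b]$. Using condition $(iv)$, which gives vanishing of these perturbations as $n\to\infty$ uniformly in $k$, together with condition $(v)$ and the boundedness of $x$ and $x^2$ on the compact interval $[0,b]$, I would bound each difference $|L_n^{\ast(\alpha,\beta)}(e_i;q,x)-e_i(x)|$ by a quantity independent of $x$ that tends to $0$. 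Once the three suprema over $[0,b]$ are shown to vanish, the Bohman--Korovkin theorem delivers $L_n^{\ast(\alpha,\beta)}(f;q,x)\to f(x)$ uniformly on $[0,b]$ for every $f\in C[0,b]$, completing the proof.
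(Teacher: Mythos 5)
Your overall strategy---linearity, positivity, and the Lemma 6 test-function identities fed into the Bohman--Korovkin theorem---is exactly the argument the paper intends: the paper offers no proof of this theorem beyond the remark ``Using above Lemma,'' and your treatment of the limits of $(10)$--$(12)$ (using $q_{n}\rightarrow 1$, hence $[n]_{q_{n}}\rightarrow \infty $, together with conditions $(iv)$--$(v)$, and getting uniformity on $[0,b]$ from the uniform smallness of the $\alpha _{k,n}(x)$ and the boundedness of $x,x^{2}$) is correct. The genuine gap is the positivity step. You assert that ``by Definition 2 the $q$-integral of a nonnegative function over the relevant interval is a sum of nonnegative terms, hence nonnegative.'' That is false: Definition 2 defines the $q$-integral over $\left[ a,b\right] $, $0<a<b$, as a \emph{difference} of two Jackson integrals,
\begin{equation*}
\int\limits_{a}^{b}f(t)\,d_{q}t=(1-q)\sum\limits_{j=0}^{\infty }q^{j}\left( b\,f(q^{j}b)-a\,f(q^{j}a)\right) ,
\end{equation*}
and the two families of evaluation points $\{q^{j}b\}$ and $\{q^{j}a\}$ are in general disjoint. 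Taking a continuous $f\geq 0$ that vanishes at every point $q^{j}b$ but satisfies $f(a)>0$ gives $\int_{a}^{b}f\,d_{q}t\leq -(1-q)\,a\,f(a)<0$. In the operator $(6)$ the lower limit $q\left( \frac{\left[ k\right] _{q}+q^{k-1}\alpha }{\left[ n\right] _{q}+\beta }\right) $ is in general not a power of $q$ times the upper limit $\frac{\left[ k+1\right] _{q}+q^{k}\alpha }{\left[ n\right] _{q}+\beta }$, so this pathology occurs for precisely the integrals appearing in $L_{n}^{\ast (\alpha ,\beta )}$: the operator is \emph{not} a positive operator on $C[0,b]$, and the Korovkin theorem cannot be invoked as you do.

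This failure of positivity is not a technicality peculiar to your write-up; it is a known defect of Kantorovich-type operators built on Jackson $q$-integrals over $\left[ a,b\right] $, and it is visible inside the paper itself: nonnegativity of $\int_{a}^{b}f\,d_{q}t$ \emph{does} hold when $f\geq 0$ is non-decreasing (then each bracket $b\,f(q^{j}b)-a\,f(q^{j}a)$ is nonnegative, since $q^{j}b>q^{j}a$), which is exactly why the paper's quantitative results, Theorems 8 and 10, are stated only for non-decreasing $f$. To close the gap you would have to either restrict the class of functions (e.g.\ to monotone $f$, weakening the statement), or abandon Korovkin and estimate $\left\vert L_{n}^{\ast (\alpha ,\beta )}(f;q,x)-f(x)\right\vert $ directly without using positivity, or appeal to a Korovkin-type theorem valid for non-positive operators. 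As written, the key hypothesis of the theorem you invoke is unverified---and in fact false for these operators---although, in fairness, the paper glosses over the very same difficulty by stating the theorem with no proof at all.
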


\section{Rate\ of\ convergence}

$B_{\rho _{\gamma }}(%
\mathbb{R}
_{+}),$ the weighted space of real valued functions $f$ defined on $%
\mathbb{R}
_{+}$ with the property $\left\vert f\left( x\right) \right\vert \leq
M_{f}\rho _{_{\gamma }}\left( x\right) $ where $\rho _{_{\gamma }}\left(
x\right) =1+x^{\gamma +2}$ and $M_{f}$ is constant depending on the function 
$f.$ We also consider the weighted subspace $C_{\rho _{_{\gamma }}}(%
\mathbb{R}
_{+})$ of $B_{\rho _{_{\gamma }}}(%
\mathbb{R}
_{+})$ given by 
\begin{equation*}
C_{\rho _{\gamma }}(%
\mathbb{R}
_{+}):=\left\{ f\in B_{\rho _{_{\gamma }}}(%
\mathbb{R}
_{+}):f~\ \text{continuous on }%
\mathbb{R}
_{+}\right\} .
\end{equation*}%
The norm in $B_{\rho _{_{\gamma }}}$ is defined as%
\begin{equation*}
\left\Vert f\right\Vert _{\rho _{_{\gamma }}}=\sup_{x\in 
\mathbb{R}
_{+}}\frac{\left\vert f(x)\right\vert }{\rho _{_{\gamma }}(x)}.
\end{equation*}

We can give some estimations of the errors $\left\vert L_{n}^{\ast (\alpha
,\beta )}(f;q,x)-f(x)\right\vert $, $n\in N$, for unbounded functions by
using a weighted modulus of smoothness associated to the space $B_{\rho
_{\gamma }}(%
\mathbb{R}
_{+}).$

We consider%
\begin{equation}
\Omega _{\rho _{_{\gamma }}}(f;\delta )=\underset{x\geq 0,\ 0<h\leq \delta }{%
\sup }\frac{\left\vert f(x+h)-f(x)\right\vert }{1+(x+h)^{2+\gamma }}~,\ \ \
\ \delta >0,\ \ \ \gamma \geq 0.  \tag{13}
\end{equation}%
It is evident that for each $f\in B_{\rho _{\gamma }}(%
\mathbb{R}
_{+})\,,$ $\Omega _{\rho _{\gamma }}(f;.)$ is well defined and\ \ 
\begin{equation*}
\Omega _{\rho _{\gamma }}(f;\delta )\leq 2\left\Vert f\right\Vert _{\rho
_{\gamma }}.
\end{equation*}%
The weighted modulus of smoothness $\Omega _{\rho _{\gamma }}(f;.)$ posseses
the following properties.

\begin{equation}
\Omega _{\rho _{\gamma }}(f;\lambda \delta )\leq (\lambda +1)\Omega _{\rho
_{\gamma }}(f;\delta ),\ \ \delta >0,\ \lambda >0,  \tag{14}
\end{equation}

\begin{equation*}
\Omega _{\rho _{\gamma }}(f;n\delta )\leq n\Omega _{\rho _{\gamma
}}(f;\delta ),\ n\in N,
\end{equation*}

\begin{equation*}
\underset{\delta \rightarrow 0^{+}}{\lim }\Omega _{\rho _{\gamma }}(f;\delta
)=0.
\end{equation*}

As it is known, weighted Korovkin type theorems have been proven by
Gadjiev(see [9]).

\begin{theorem}
Let $q\in (0,1)$ and $\gamma \geq 0$. For all non-decreasing $f\in B_{\rho
_{\gamma }}(%
\mathbb{R}
_{+})$ we have%
\begin{equation*}
\left\vert L_{n}^{\ast (\alpha ,\beta )}(f;q,x)-f(x)\right\vert \leq \sqrt{%
L_{n}^{\ast (\alpha ,\beta )}(\mu _{x,\gamma }^{2};x)}\left( 1+\frac{1}{%
\delta }\sqrt{L_{n}^{\ast (\alpha ,\beta )}(\psi _{x}^{2};x)}\right) \Omega
_{\rho _{\gamma }}(f;\delta ),
\end{equation*}%
$x\geq 0,~\delta >0,~n\in N$, where $\mu _{x,\gamma }(t):=1+\left(
x+\left\vert t-x\right\vert \right) ^{2+\gamma },\ \ \psi
_{x}(t):=\left\vert t-x\right\vert ,\ \ t\geq 0$.
\end{theorem}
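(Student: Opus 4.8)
The plan is to run the standard Korovkin-type error estimate for positive linear operators in a weighted setting, exploiting that $L_{n}^{\ast (\alpha ,\beta )}$ is positive and reproduces constants. First I would record that, for $x\geq 0$, each coefficient $q^{k(k-1)/2}\frac{(-1)^{k}D_{q}^{k}(\varphi _{n}(x))}{[k]_{q}!}x^{k}$ is nonnegative by condition $(iii)$, and that the $q$-integral of a nonnegative integrand is nonnegative; hence $L_{n}^{\ast (\alpha ,\beta )}$ is a positive linear operator. Combined with $L_{n}^{\ast (\alpha ,\beta )}(e_{0};q,x)=1$ from $(10)$, this yields
\begin{equation*}
\left\vert L_{n}^{\ast (\alpha ,\beta )}(f;q,x)-f(x)\right\vert =\left\vert L_{n}^{\ast (\alpha ,\beta )}(f(t)-f(x);q,x)\right\vert \leq L_{n}^{\ast (\alpha ,\beta )}\left( \left\vert f(t)-f(x)\right\vert ;q,x\right).
\end{equation*}

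The key step is a pointwise bound on $\left\vert f(t)-f(x)\right\vert$ in terms of the weighted modulus $(13)$. For $t\geq x$ I set $h=t-x=\psi _{x}(t)$, so that $x+\psi _{x}(t)=t$ and the definition of $\Omega _{\rho _{\gamma }}$ gives $\left\vert f(t)-f(x)\right\vert \leq (1+t^{2+\gamma })\Omega _{\rho _{\gamma }}(f;\psi _{x}(t))$; for $t<x$ I instead use base point $t$ and increment $x-t$, so that the relevant weight is $1+x^{2+\gamma }\leq 1+(x+\psi _{x}(t))^{2+\gamma }$. In both cases the monotonicity of $f$ fixes the sign of $f(t)-f(x)$ and one obtains
\begin{equation*}
\left\vert f(t)-f(x)\right\vert \leq \mu _{x,\gamma }(t)\,\Omega _{\rho _{\gamma }}(f;\psi _{x}(t)).
\end{equation*}
Applying property $(14)$ with $\lambda =\psi _{x}(t)/\delta$ then replaces $\Omega _{\rho _{\gamma }}(f;\psi _{x}(t))$ by $(1+\psi _{x}(t)/\delta )\Omega _{\rho _{\gamma }}(f;\delta )$, giving
\begin{equation*}
\left\vert f(t)-f(x)\right\vert \leq \mu _{x,\gamma }(t)\left( 1+\frac{\psi _{x}(t)}{\delta }\right) \Omega _{\rho _{\gamma }}(f;\delta ).
\end{equation*}

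Next I would apply the positive linear operator to this inequality, using linearity and the fact that $\Omega _{\rho _{\gamma }}(f;\delta )$ is constant in $t$, to get
\begin{equation*}
\left\vert L_{n}^{\ast (\alpha ,\beta )}(f;q,x)-f(x)\right\vert \leq \Omega _{\rho _{\gamma }}(f;\delta )\left[ L_{n}^{\ast (\alpha ,\beta )}(\mu _{x,\gamma };x)+\frac{1}{\delta }L_{n}^{\ast (\alpha ,\beta )}(\mu _{x,\gamma }\psi _{x};x)\right].
\end{equation*}
Finally I would invoke the Cauchy--Schwarz inequality for positive linear operators: since $L_{n}^{\ast (\alpha ,\beta )}(e_{0};x)=1$ we have $L_{n}^{\ast (\alpha ,\beta )}(\mu _{x,\gamma };x)\leq \sqrt{L_{n}^{\ast (\alpha ,\beta )}(\mu _{x,\gamma }^{2};x)}$, and likewise $L_{n}^{\ast (\alpha ,\beta )}(\mu _{x,\gamma }\psi _{x};x)\leq \sqrt{L_{n}^{\ast (\alpha ,\beta )}(\mu _{x,\gamma }^{2};x)}\,\sqrt{L_{n}^{\ast (\alpha ,\beta )}(\psi _{x}^{2};x)}$. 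Factoring out $\sqrt{L_{n}^{\ast (\alpha ,\beta )}(\mu _{x,\gamma }^{2};x)}$ produces exactly the claimed bound.

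I expect the main obstacle to be the middle step: correctly matching the weight coming from the definition of $\Omega _{\rho _{\gamma }}$ to the function $\mu _{x,\gamma }(t)=1+(x+\psi _{x}(t))^{2+\gamma }$, in particular handling the case $t<x$, where the naive weight $1+x^{2+\gamma }$ must be enlarged to $\mu _{x,\gamma }(t)$, and verifying that the monotonicity hypothesis is precisely what licenses removing the absolute value so that $(13)$ applies directly. The remaining ingredients---positivity of the operator, reproduction of constants, and the Cauchy--Schwarz estimate---are routine.
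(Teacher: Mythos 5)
Your proposal follows the same route as the paper's proof: the pointwise estimate $\left\vert f(t)-f(x)\right\vert \leq \mu _{x,\gamma }(t)\left( 1+\psi _{x}(t)/\delta \right) \Omega _{\rho _{\gamma }}(f;\delta )$ drawn from (13) and (14), then application of the operator, then Cauchy--Schwarz combined with $L_{n}^{\ast (\alpha ,\beta )}(e_{0};q,x)=1$; the paper's intermediate step (15) is only a change of variables and changes nothing essential. The genuine problem is your justification of positivity. You assert that ``the $q$-integral of a nonnegative integrand is nonnegative,'' and from this that $L_{n}^{\ast (\alpha ,\beta )}$ is a positive operator. For Jackson integrals over $[a,b]$ with $a>0$ this assertion is false: by Definition 2, $\int_{a}^{b}g(t)d_{q}t=\int_{0}^{b}g(t)d_{q}t-\int_{0}^{a}g(t)d_{q}t$ is a difference of two sums carried by the lattices $\{q^{j}b\}$ and $\{q^{j}a\}$, and a continuous $g\geq 0$ that vanishes on the first lattice but is positive on the second makes the integral strictly negative. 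Positivity for every nonnegative integrand requires $a=q^{m}b$ for some integer $m$ (or $a=0$), and the limits in (6), namely $a_{k}=q([k]_{q}+q^{k-1}\alpha )/([n]_{q}+\beta )$ and $b_{k}=([k+1]_{q}+q^{k}\alpha )/([n]_{q}+\beta )$, are in general not of this form (take $k=1$, $\alpha =0$: $a_{1}=q/([n]_{q}+\beta )$, whereas $q^{m}b_{1}=(q^{m}+q^{m+1})/([n]_{q}+\beta )$). Consequently your opening inequality $\vert L_{n}^{\ast (\alpha ,\beta )}(f;q,x)-f(x)\vert \leq L_{n}^{\ast (\alpha ,\beta )}(\vert f(\cdot )-f(x)\vert ;q,x)$, the monotone application of the operator to the pointwise bound, and the Cauchy--Schwarz inequality for $L_{n}^{\ast (\alpha ,\beta )}$ are all unsupported as you argue them.

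This is exactly where the hypothesis ``non-decreasing,'' which you relegate to the pointwise step, is actually supposed to enter. The modulus (13) already carries an absolute value in its numerator, so your pointwise bound (including the case $t<x$) holds for every $f\in B_{\rho _{\gamma }}(\mathbb{R}_{+})$ with no monotonicity at all; your stated reason for the hypothesis is empty. What monotonicity really buys is a weak substitute for positivity: if $g\geq 0$ is non-decreasing, then $bg(q^{j}b)-ag(q^{j}a)\geq 0$ termwise, hence $\int_{a}^{b}g(t)d_{q}t\geq 0$. Whether this restricted positivity suffices to push the non-monotone majorant $\mu _{x,\gamma }(t)\left( 1+\psi _{x}(t)/\delta \right)$ through the operator is a real issue which, admittedly, the paper's own proof also passes over in silence; but the paper does not commit itself to your false general claim, and its hypothesis is at least aimed at the right step. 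To repair your argument you would need to prove, for non-decreasing $f$, the specific inequality $\vert \int_{a_{k}}^{b_{k}}(f(q^{-k+1}t)-f(x))d_{q}t\vert \leq \Omega _{\rho _{\gamma }}(f;\delta )\int_{a_{k}}^{b_{k}}\mu _{x,\gamma }(q^{-k+1}t)\left( 1+\psi _{x}(q^{-k+1}t)/\delta \right) d_{q}t$, which does not follow from anything you wrote.
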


\begin{proof}
Let $n\in N$ and $f\in B_{\rho _{\gamma }}(%
\mathbb{R}
_{+}).$ From (13) and (14), we can write%
\begin{eqnarray*}
\left\vert f(t)-f(x)\right\vert &\leq &\left( 1+\left( x+\left\vert
t-x\right\vert \right) ^{2+\gamma }\right) \left( 1+\frac{1}{\delta }%
\left\vert t-x\right\vert \right) \Omega _{\rho _{\gamma }}(f;\delta ) \\
&=&\mu _{x,\gamma }(t)\left( 1+\frac{1}{\delta }\psi _{x}(t)\right) \Omega
_{\rho _{\gamma }}(f;\delta ).
\end{eqnarray*}%
Taking into account the definition of $q-$ integration, we get%
\begin{equation}
\dint\limits_{q\left( \frac{\left[ k\right] _{q}+q^{k-1}\alpha }{\left[ n%
\right] _{q}+\beta }\right) }^{\frac{\left[ k+1\right] _{q}+q^{k}\alpha }{%
\left[ n\right] _{q}+\beta }}f\left( q^{-k+1}t\right)
d_{q}t=q^{k-1}\dint\limits_{\frac{\left[ k\right] _{q}+q^{k-1}\alpha }{%
q^{k-2}\left( \left[ n\right] _{q}+\beta \right) }}^{\frac{\left[ k+1\right]
_{q}+q^{k}\alpha }{q^{k-1}\left( \left[ n\right] _{q}+\beta \right) }%
}f\left( t\right) d_{q}t.  \tag{15}
\end{equation}%
Consequently, the operators $L_{n}^{\ast (\alpha ,\beta )}$ can be expressed
as follows%
\begin{equation*}
L_{n}^{\ast (\alpha ,\beta )}(f;q,x)=\left( \left[ n\right] _{q}+\beta
\right) \dsum\limits_{k=0}^{\infty }q^{\frac{k(k-1)}{2}}\frac{%
D_{q}^{k}\left( \varphi _{n}(x)\right) }{\left[ k\right] _{q}!}\left(
-x\right) ^{k}q^{k-1}\dint\limits_{\frac{\left[ k\right] _{q}+q^{k-1}\alpha 
}{q^{k-2}\left( \left[ n\right] _{q}+\beta \right) }}^{\frac{\left[ k+1%
\right] _{q}+q^{k}\alpha }{q^{k-1}\left( \left[ n\right] _{q}+\beta \right) }%
}f\left( t\right) d_{q}t.
\end{equation*}%
By using the Cauchy-Schwartz inequality and (15), we obtain%
\begin{eqnarray*}
&&\left\vert L_{n}^{\ast (\alpha ,\beta )}(f;q,x)-f(x)\right\vert \\
&\leq &\left( \left[ n\right] _{q}+\beta \right) \dsum\limits_{k=0}^{\infty
}q^{\frac{k(k-1)}{2}}\frac{D_{q}^{k}\left( \varphi _{n}(x)\right) }{\left[ k%
\right] _{q}!}\left( -x\right) ^{k}q^{k-1}\dint\limits_{\frac{\left[ k\right]
_{q}+q^{k-1}\alpha }{q^{k-2}\left( \left[ n\right] _{q}+\beta \right) }}^{%
\frac{\left[ k+1\right] _{q}+q^{k}\alpha }{q^{k-1}\left( \left[ n\right]
_{q}+\beta \right) }}\left\vert f\left( t\right) -f(x)\right\vert d_{q}t \\
&\leq &\left( L_{n}^{\ast (\alpha ,\beta )}(\mu _{x,\gamma };x)+\frac{1}{%
\delta }L_{n}^{\ast (\alpha ,\beta )}(\mu _{x,\gamma }\psi _{x};x)\right)
\Omega _{\rho _{\gamma }}(f;\delta ) \\
&\leq &\sqrt{L_{n}^{\ast (\alpha ,\beta )}(\mu _{x,\gamma }^{2};x)}\left( 1+%
\frac{1}{\delta }\sqrt{L_{n}^{\ast (\alpha ,\beta )}(\psi _{x}^{2};x)}%
\right) \Omega _{\rho _{\gamma }}(f;\delta ).
\end{eqnarray*}
\end{proof}

\begin{lemma}
For $m\in N$ and $q\in (0,1)$ we have%
\begin{equation*}
L_{n}^{\ast (\alpha ,\beta )}(e_{m};q,x)\leq A_{m,q}\left( 1+x^{m}\right) \
,\ \ x\in 
\mathbb{R}
_{+},\ \ n\in N,
\end{equation*}%
where $A_{m,q}$ is a positive constant depending only on $m,~\alpha $ and $q$%
.
\end{lemma}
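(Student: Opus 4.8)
The plan is to reduce the estimate to the moments of the underlying Stancu type $q$-Baskakov operator $L_n^{\alpha,\beta}$. Write $N:=[n]_q+\beta$ and set $W_k(x):=q^{k(k-1)/2}\frac{D_q^k(\varphi_n(x))}{[k]_q!}(-x)^k$; by condition $(iii)$ each $W_k(x)\ge 0$ for $x\ge 0$, and by $(3)$ one has $\sum_{k\ge 0}W_k(x)=L_n^{\alpha,\beta}(e_0;q,x)=1$. Since $W_k(x)\ge0$ and $q^{-km+m}>0$, it suffices to bound the inner $q$-integral term by term. First I would compute, from Definition 2, the exact value $\int_0^{c}t^m\,d_qt=c^{m+1}/[m+1]_q$, whence $\int_a^{b}t^m\,d_qt=(b^{m+1}-a^{m+1})/[m+1]_q$ with $a=q(\,[k]_q+q^{k-1}\alpha)/N$ and $b=([k+1]_q+q^k\alpha)/N$.

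Next I would invoke the elementary inequality $b^{m+1}-a^{m+1}\le (m+1)\,b^{m}(b-a)$, valid for $0\le a\le b$, together with $b-a=1/N$ (this is $(7)$) and the identity $[k+1]_q=[k]_q+q^k$. After multiplying by $N\,q^{-km+m}$ and summing, the estimate collapses to
\[
L_n^{\ast(\alpha,\beta)}(e_m;q,x)\le\frac{(m+1)\,q^{m}}{[m+1]_q\,N^{m}}\sum_{k=0}^{\infty}W_k(x)\bigl(q^{-k}[k+1]_q+\alpha\bigr)^{m}.
\]
Using $q^{-k}[k+1]_q=q^{-k}[k]_q+1$ and the binomial theorem, this becomes $\frac{(m+1)q^m}{[m+1]_qN^m}\sum_{j=0}^{m}\binom{m}{j}(1+\alpha)^{m-j}S_j(x)$, where $S_j(x):=\sum_{k\ge0}W_k(x)\bigl(q^{-k}[k]_q\bigr)^{j}$.

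The heart of the argument is the estimate
\[
S_j(x)\le D_{j}\,N^{j}\bigl(1+x^{j}\bigr),\qquad 0\le j\le m,
\]
with $D_j$ depending only on $j,\alpha,q$. I would prove this by relating $S_j$ to the moments of $L_n^{\alpha,\beta}$: since the node of $L_n^{\alpha,\beta}$ is $(q\,q^{-k}[k]_q+\alpha)/N$, expanding gives $N^{j}L_n^{\alpha,\beta}(e_j;q,x)=\sum_{i=0}^{j}\binom{j}{i}q^{i}\alpha^{j-i}S_i(x)$, so that each $S_j$ is a fixed (in $n$) linear combination of $N^{i}L_n^{\alpha,\beta}(e_i;q,x)$, $i\le j$. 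For $j\le 2$ the required bound $L_n^{\alpha,\beta}(e_j;q,x)\le C_j(1+x^j)$ is read off from $(3)$--$(5)$ of Lemma 4, using that the $\alpha_{k,n}$ are uniformly bounded and that $[m(n)]_q/[n]_q$ and $[n]_q/N$ stay bounded by $(v)$; for general $j$ it follows by induction on $j$ via the lowering recurrence $(2)$ of condition $(iv)$, exactly as Lemma 4 is obtained in $[5]$. Solving the resulting triangular system for $S_j$ then yields the displayed bound.

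Finally I would assemble the pieces: inserting $S_j(x)\le D_jN^j(1+x^j)$, cancelling $N^{j}/N^{m}=N^{j-m}\le 1$ (as $N=[n]_q+\beta\ge 1$), and using $1+x^{j}\le 2(1+x^{m})$ for $x\ge0$ and $0\le j\le m$, gives $L_n^{\ast(\alpha,\beta)}(e_m;q,x)\le A_{m,q}(1+x^{m})$ with
\[
A_{m,q}=\frac{2(m+1)\,q^{m}}{[m+1]_q}\sum_{j=0}^{m}\binom{m}{j}(1+\alpha)^{m-j}D_j,
\]
a constant depending only on $m,\alpha,q$. The main obstacle is the factor $q^{-k}$ in $S_j$, which grows with $k$: the point is that it is absorbed by the $q^{k(k-1)/2}$ decay inside $W_k(x)$, and this absorption is made quantitative precisely through the identification of $S_j$ with the base-operator moments, so that establishing the higher-order moment bound $L_n^{\alpha,\beta}(e_j;q,x)\le C_j(1+x^j)$ for $j>2$ is the key step.
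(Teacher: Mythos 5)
Your proof is correct, and its skeleton coincides with the paper's: both evaluate the $q$-integral in closed form (via $\int_0^c t^m\,d_qt=c^{m+1}/[m+1]_q$), bound the resulting difference of $(m+1)$-st powers by $(m+1)$ times the $m$-th power of the larger endpoint times the endpoint gap $1/([n]_q+\beta)$ --- the paper performs this identical algebra unscaled, factoring $A^{m+1}-B^{m+1}=(A-B)\left(A^{m}+A^{m-1}B+\cdots+B^{m}\right)$ with $A=[k+1]_q+q^{k}\alpha$, $B=q[k]_q+q^{k}\alpha$, $A-B=1$ --- and both ultimately rest on the polynomial-growth bound $L_n^{\alpha,\beta}(e_j;q,x)\le C_j(1+x^{j})$ for the base operator, proved by induction through the recurrence (2) of condition (iv); that induction is where the paper spends most of its effort (producing $B_{m,q}$ in (17)), and you defer it to the same argument, so you have correctly isolated the crux even if you leave it at roughly the paper's own level of detail. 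The genuine divergence is the intermediate reduction. The paper replaces $([k+1]_q+q^{k}\alpha)^{m}$ by $2^{m}([k]_q+q^{k}\alpha)^{m}$ using $1\le[k+1]_q\le 2[k]_q$ (its inequality (16)); that inequality fails at $k=0$, so the $k=0$ term must be split off and controlled by the extra hypothesis (vi), and only the $m$-th base moment is then needed. You instead write $q^{-k}[k+1]_q=q^{-k}[k]_q+1$, expand binomially into $S_j(x)=\sum_{k}W_k(x)\bigl(q^{-k}[k]_q\bigr)^{j}$, and bound $S_j\le q^{-j}N^{j}L_n^{\alpha,\beta}(e_j;q,x)$; note that since every term of your triangular identity is nonnegative, this bound follows immediately from the single term $i=j$, with no solving of the system required. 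Your route needs the base moment bound for every order $j\le m$ rather than only $j=m$, but in exchange it never invokes condition (vi) and has no exceptional case at $k=0$, because $[0]_q=0$ annihilates all terms with $j\ge 1$ there; so your argument is somewhat more self-contained and hypothesis-light, while the paper's is more direct once (vi) is granted.
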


\begin{proof}
For $k\in N$ and $0<q<1$ the following inequality holds true%
\begin{equation}
1\leq \left[ k+1\right] _{q}\leq 2\left[ k\right] _{q}.  \tag{16}
\end{equation}%
Thus, for $m\in N,$ from (1) and (16) we get%
\begin{eqnarray*}
L_{n}^{\alpha ,\beta }(e_{m};q,x) &=&\dsum\limits_{k=0}^{\infty }q^{\frac{%
k(k-1)}{2}}\frac{D_{q}^{k}\left( \varphi _{n}(x)\right) }{\left[ k\right]
_{q}!}\left( -x\right) ^{k}\frac{1}{q^{km-m}}\left( \frac{\left[ k\right]
_{q}+q^{k-1}\alpha }{\left[ n\right] _{q}+\beta }\right) ^{m} \\
&=&\frac{x\left[ n\right] _{q}}{\left[ n\right] _{q}+\beta }%
\dsum\limits_{k=0}^{\infty }q^{\frac{k(k-1)}{2}}\frac{D_{q}^{k}\left(
\varphi _{m(n)}(x)\right) \left( 1+\alpha _{k,n}(x)\right) }{\left[ k\right]
_{q}!}\left( -x\right) ^{k}\frac{1}{q^{k(m-1)}}\left( \frac{\left[ k\right]
_{q}+q^{k}\alpha }{\left[ n\right] _{q}+\beta }\right) ^{m-1} \\
&&+\frac{\alpha }{\left[ n\right] _{q}+\beta }\dsum\limits_{k=0}^{\infty }q^{%
\frac{k(k-1)}{2}}\frac{D_{q}^{k}\left( \varphi _{n}(x)\right) }{\left[ k%
\right] _{q}!}\left( -x\right) ^{k}\left( \frac{\left[ k\right]
_{q}+q^{k-1}\alpha }{q^{k-1}\left( \left[ n\right] _{q}+\beta \right) }%
\right) ^{m-1} \\
&\leq &\frac{x\left[ n\right] _{q}}{\left[ n\right] _{q}+\beta }\varphi
_{m(n)}(x)\left( 1+\alpha _{0,n}(x)\right) \left( \frac{1+\alpha }{\left[ n%
\right] _{q}+\beta }\right) ^{m-1} \\
&&+\frac{x\left[ n\right] _{q}}{\left[ n\right] _{q}+\beta }%
\dsum\limits_{k=0}^{\infty }q^{\frac{k(k-1)}{2}}\frac{D_{q}^{k}\left(
\varphi _{m(n)}(x)\right) \left( 1+\alpha _{k,n}(x)\right) }{\left[ k\right]
_{q}!}\left( -x\right) ^{k}\left( \frac{2\left[ k\right] _{q}+q^{k}\alpha }{%
q^{k}\left( \left[ n\right] _{q}+\beta \right) }\right) ^{m-1} \\
&&+\frac{\alpha }{\left[ n\right] _{q}+\beta }L_{n}^{\alpha ,\beta
}(e_{m-1};q,x) \\
&=&\frac{x\left[ n\right] _{q}}{\left[ n\right] _{q}+\beta }\varphi
_{m(n)}(x)\left( 1+\alpha _{0,n}(x)\right) \left( \frac{1+\alpha }{\left[ n%
\right] _{q}+\beta }\right) ^{m-1} \\
&&+\frac{x\left[ n\right] _{q}}{\left[ n\right] _{q}+\beta }\left( \frac{2}{q%
}\right) ^{m-1}L_{n+1}^{\alpha ,\beta }(e_{m-1};q,x)+\frac{\alpha }{\left[ n%
\right] _{q}+\beta }L_{n}^{\alpha ,\beta }(e_{m-1};q,x) \\
&\leq &x+\left( \frac{2}{q}\right) ^{m-1}\frac{1}{\left[ n\right] _{q}+\beta 
}\left( x\left[ n\right] _{q}+\alpha q^{m-1}\right) L_{n+1}^{\alpha ,\beta
}(e_{m-1};q,x) \\
&\leq &2m\left( \frac{2}{q}\right) ^{m-1}\frac{1}{\left[ n\right] _{q}+\beta 
}\left( \left[ n\right] _{q}(1+x^{m})+\alpha ^{m}q^{\frac{m(m-1)}{2}}\right)
.
\end{eqnarray*}%
based on the above inequality and by using the mathematical induction over $%
m\in N$, we obtain%
\begin{equation*}
L_{n}^{\alpha ,\beta }(e_{m};q,x)\leq B_{m,q}\left( 1+x^{m}\right) ,
\end{equation*}%
$x\in 
\mathbb{R}
_{+},\ n\in N,$\ where%
\begin{equation}
B_{m,q}:=2m\left( \frac{2}{q}\right) ^{\frac{m\left( m-1\right) }{2}}\left(
1+\alpha ^{m}q^{\frac{m(m-1)}{2}}\right) .  \tag{17}
\end{equation}%
On the other hand,%
\begin{eqnarray*}
L_{n}^{\ast (\alpha ,\beta )}(e_{m};q,x) &=&\left( \left[ n\right]
_{q}+\beta \right) \dsum\limits_{k=0}^{\infty }q^{\frac{k(k-1)}{2}}\frac{%
D_{q}^{k}\left( \varphi _{n}(x)\right) }{\left[ k\right] _{q}!}\left(
-x\right) ^{k}\dint\limits_{q\left( \frac{\left[ k\right] _{q}+q^{k-1}\alpha 
}{\left[ n\right] _{q}+\beta }\right) }^{\frac{\left[ k+1\right]
_{q}+q^{k}\alpha }{\left[ n\right] _{q}+\beta }}e_{m}\left( q^{-k+1}t\right)
d_{q}t \\
&=&\frac{\left[ n\right] _{q}+\beta }{\left( \left[ n\right] _{q}+\beta
\right) ^{m+1}}\dsum\limits_{k=0}^{\infty }q^{\frac{k(k-1)}{2}}\frac{%
D_{q}^{k}\left( \varphi _{n}(x)\right) }{\left[ k\right] _{q}!}\left(
-x\right) ^{k}\frac{q^{-km+m}}{\left[ m+1\right] _{q}} \\
&&\times \left\{ \left( \left[ k+1\right] _{q}+q^{k}\alpha \right)
^{m+1}-q^{m+1}\left( \left[ k\right] _{q}+q^{k-1}\alpha \right)
^{m+1}\right\} .
\end{eqnarray*}%
Since%
\begin{eqnarray*}
&&\left( \left[ k+1\right] _{q}+q^{k}\alpha \right) ^{m+1}-q^{m+1}\left( %
\left[ k\right] _{q}+q^{k-1}\alpha \right) ^{m+1} \\
&=&\left( \left( \left[ k+1\right] _{q}+q^{k}\alpha \right) ^{m}+q\left( %
\left[ k+1\right] _{q}+q^{k}\alpha \right) ^{m-1}\left( \left[ k\right]
_{q}+q^{k-1}\alpha \right) +...+q^{m}\left( \left[ k\right]
_{q}+q^{k-1}\alpha \right) ^{m}\right) \\
&\leq &\left( m+1\right) \left( \left[ k+1\right] _{q}+q^{k}\alpha \right)
^{m} \\
&\leq &\left( m+1\right) 2^{m}\left( \left[ k\right] _{q}+q^{k}\alpha
\right) ^{m},\ k\in N,
\end{eqnarray*}%
from condition $(vi)$, we can write%
\begin{eqnarray*}
L_{n}^{\ast (\alpha ,\beta )}(e_{m};q,x) &\leq &\frac{\varphi
_{n}(x)(m+1)(1+\alpha )^{m}q^{m}}{\left( \left[ n\right] _{q}+\beta \right)
^{m}\left[ m+1\right] _{q}}+\frac{2^{m}(m+1)}{\left[ m+1\right] _{q}}%
L_{n}^{\alpha ,\beta }(e_{m};q,x) \\
&\leq &A_{m,q}\left( 1+x^{m}\right) ,
\end{eqnarray*}%
where $A_{m,q}:=\frac{(m+1)(1+\alpha )^{m}q^{m}}{\left[ m+1\right] _{q}}+%
\frac{2^{m}(m+1)}{\left[ m+1\right] _{q}}B_{m,q}$ and $B_{m,q}$ is given by
(17).
\end{proof}

\begin{remark}
Since any linear \ positive operator is monotone, from Lemma 9\ we can
easily see that $L_{n}^{\ast (\alpha ,\beta )}(f;q,.)\in B_{\rho _{\gamma }}(%
\mathbb{R}
_{+})$ for each $f\in B_{\rho _{\gamma }}(%
\mathbb{R}
_{+}),\ \gamma \in N_{0}.$
\end{remark}

\begin{theorem}
Let $f\in B_{\rho _{\gamma }}(%
\mathbb{R}
_{+})$ be a non-decreasing function$,$ then%
\begin{equation*}
\left\Vert L_{n}^{\ast (\alpha ,\beta )}(f;q_{n},.)-f\right\Vert _{\rho
_{\gamma +1}}\leq K_{\gamma ,q_{0}}\Omega _{\rho _{\gamma }}(f;\delta _{n}),
\end{equation*}%
where $\delta _{n}:=\sqrt{\frac{\left[ n\right] _{q_{n}}\eta _{n}+1}{%
q_{n}\left( \left[ n\right] _{q_{n}}+\beta \right) }}$ and $K_{\gamma
,q_{0}} $ is a positive constant independent on $f$ and $n$.
\end{theorem}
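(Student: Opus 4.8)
The plan is to derive the weighted-norm estimate from the pointwise inequality of Theorem 8, applied with $q=q_n$ and $\delta=\delta_n$, by controlling the two moment factors $\sqrt{L_n^{\ast(\alpha,\beta)}(\mu_{x,\gamma}^2;q_n,x)}$ and $\sqrt{L_n^{\ast(\alpha,\beta)}(\psi_x^2;q_n,x)}$ uniformly in $x\ge 0$ and $n\in N$. Since $0<q_n<1$ and $q_n\to 1$, I first fix $q_0:=\inf_n q_n>0$; because the constants $B_{m,q}$, $A_{m,q}$ of Lemma 9 are monotone in $q$ (see (17)), each bound of Lemma 9 may be taken with $q_0$ in place of $q_n$, and this is precisely what produces a final constant $K_{\gamma,q_0}$ independent of $n$.

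First I would estimate the central second moment. Writing $\psi_x^2=e_2-2xe_1+x^2e_0$ and substituting the explicit formulas (10)--(12) of Lemma 6, I collect the result as a quadratic $c_2(n)x^2+c_1(n)x+c_0(n)$. In the ideal situation $q=1$, $\alpha_{k,n}\equiv 0$, $[m(n)]_q=[n]_q$ the leading coefficient reduces to $c_2(n)=\bigl([n]_q/([n]_q+\beta)-1\bigr)^2=\beta^2/([n]_q+\beta)^2$, and the lower-order coefficients are $O(1/[n]_q)$; reinstating the perturbations, condition (v), $q_n\to 1$ and the uniform vanishing of $\alpha_{k,n}$ guarantee that each $c_i(n)$ tends to zero at the rate governed by $1/[n]_{q_n}$ together with those error terms. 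Packaging all the error terms into the single quantity $\eta_n$ (which is what makes $\eta_n\to 0$), one arrives at
\[
L_n^{\ast(\alpha,\beta)}(\psi_x^2;q_n,x)\le \frac{[n]_{q_n}\eta_n+1}{q_n([n]_{q_n}+\beta)}\,(1+x^2)=\delta_n^2\,(1+x^2),
\]
so that $\tfrac{1}{\delta_n}\sqrt{L_n^{\ast(\alpha,\beta)}(\psi_x^2;q_n,x)}\le\sqrt{1+x^2}\le 1+x$. I expect this step to be the main obstacle: it forces the careful bookkeeping of the $\alpha_{k,n}$, the $q_n$-factors, and the ratio $[m(n)]_q/[n]_q$ that the definition of $\eta_n$, and hence of $\delta_n$, is designed to absorb.

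Next I would bound the weight-type moment. Since $x+|t-x|\le 2x+t$ for $t,x\ge 0$, I have $\mu_{x,\gamma}^2(t)=(1+(x+|t-x|)^{2+\gamma})^2\le 2+2(2x+t)^{4+2\gamma}$; expanding $(2x+t)^{4+2\gamma}$ by the binomial theorem and applying $L_n^{\ast(\alpha,\beta)}$ termwise, Lemma 9 (with $q_0$) gives $L_n^{\ast(\alpha,\beta)}(e_j;q_n,x)\le A_{j,q_0}(1+x^j)$ for $0\le j\le 4+2\gamma$, whence
\[
L_n^{\ast(\alpha,\beta)}(\mu_{x,\gamma}^2;q_n,x)\le C_\gamma\,(1+x^{4+2\gamma})
\]
for a constant $C_\gamma$ depending only on $\gamma,\alpha,q_0$, and therefore $\sqrt{L_n^{\ast(\alpha,\beta)}(\mu_{x,\gamma}^2;q_n,x)}\le\sqrt{C_\gamma}\,(1+x^{2+\gamma})$.

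Finally I would assemble the estimates. Dividing the pointwise bound of Theorem 8 by $\rho_{\gamma+1}(x)=1+x^{\gamma+3}$ and inserting the two estimates above gives
\[
\frac{\bigl|L_n^{\ast(\alpha,\beta)}(f;q_n,x)-f(x)\bigr|}{1+x^{\gamma+3}}\le \sqrt{C_\gamma}\,\frac{(1+x^{2+\gamma})(2+x)}{1+x^{\gamma+3}}\,\Omega_{\rho_\gamma}(f;\delta_n).
\]
The numerator $(1+x^{2+\gamma})(2+x)$ is a polynomial of degree $\gamma+3$, each of whose monomials is bounded by a constant multiple of $1+x^{\gamma+3}$, so the fraction is bounded above by an absolute constant uniformly in $x\ge 0$. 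Taking the supremum over $x\ge 0$ therefore yields $\|L_n^{\ast(\alpha,\beta)}(f;q_n,\cdot)-f\|_{\rho_{\gamma+1}}\le K_{\gamma,q_0}\,\Omega_{\rho_\gamma}(f;\delta_n)$ with $K_{\gamma,q_0}$ independent of $f$ and $n$, which is the assertion. Note that the hypothesis that $f$ be non-decreasing enters only through the appeal to Theorem 8.
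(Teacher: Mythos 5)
Your proposal is correct and follows essentially the same route as the paper's own proof: apply Theorem 8 with $q=q_{n}$, $\delta =\delta _{n}$, use the moment formulas of Lemma 6 to show that $L_{n}^{\ast (\alpha ,\beta )}(\psi _{x}^{2};q_{n},x)$ is dominated by (a constant multiple of) $\delta _{n}^{2}(1+x^{2})$, bound $L_{n}^{\ast (\alpha ,\beta )}(\mu _{x,\gamma }^{2};q_{n},x)\leq C_{\gamma }\left( 1+x^{4+2\gamma }\right) $ via $x+\left\vert t-x\right\vert \leq 2x+t$ and Lemma 9 with constants taken at $q_{0}=\inf_{n}q_{n}$, and then divide by $\rho _{\gamma +1}$ and take the supremum. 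The only deviation is bookkeeping: the paper fixes $\eta _{n}=\sup_{x}\max \left\{ \alpha _{1,n}(x),\alpha _{1,m(n)}(x),\alpha _{2,n}(x)\right\} $ and carries an explicit factor $9(1+\beta )^{2}$ in the second-moment estimate (whence $K_{\gamma ,q_{0}}=24(1+\beta )\lambda _{\gamma ,q_{0}}$), whereas you fold those constants into $\eta _{n}$ itself --- admissible since the statement leaves $\eta _{n}$ unspecified, but note that with the paper's definition of $\eta _{n}$ your clean bound $L_{n}^{\ast (\alpha ,\beta )}(\psi _{x}^{2};q_{n},x)\leq \delta _{n}^{2}(1+x^{2})$ would not hold without such a $(1+\beta )$-type factor.
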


\begin{proof}
The identities (3)-(5) imply%
\begin{eqnarray*}
L_{n}^{\ast (\alpha ,\beta )}(\psi _{x}^{2};q_{n},x) &=&L_{n}^{\ast (\alpha
,\beta )}(\left( t-x\right) ^{2};q_{n},x) \\
&=&\frac{\left[ n\right] _{q_{n}}\left[ m(n)\right] _{q_{n}}}{q_{n}\left( %
\left[ n\right] _{q_{n}}+\beta \right) ^{2}}\left( 1+\alpha
_{1,m(n)}(x)\right) \left( 1+\alpha _{2,n}(x)\right) x^{2} \\
&&+\frac{\left[ n\right] _{q_{n}}\left[ \left[ 3\right] _{q_{n}}+q_{n}\left(
(1+3\alpha )\left[ 2\right] _{q_{n}}+1\right) \right] }{\left[ 3\right]
_{q_{n}}\left( \left[ n\right] _{q_{n}}+\beta \right) ^{2}}\left( 1+\alpha
_{1,n}(x)\right) x \\
&&+\frac{q_{n}^{2}(1+3\alpha +3\alpha ^{2})}{\left[ 3\right] _{q_{n}}\left( %
\left[ n\right] _{q_{n}}+\beta \right) ^{2}}-2x\left\{ \frac{\left[ n\right]
_{q_{n}}}{\left[ n\right] _{q_{n}}+\beta }x\left( 1+\alpha _{1,n}(x)\right) +%
\frac{q_{n}(1+2\alpha )}{\left[ 2\right] _{q_{n}}\left( \left[ n\right]
_{q_{n}}+\beta \right) }\right\} +x^{2} \\
&\leq &\frac{\left( \left[ n\right] _{q_{n}}\eta _{n}(x)+1+\beta \right)
x^{2}}{q_{n}\left( \left[ n\right] _{q_{n}}+\beta \right) }+\frac{2(3\alpha
+3)}{q_{n}\left( \left[ n\right] _{q_{n}}+\beta \right) }x+\frac{1+3\alpha
+3\alpha ^{2}}{q_{n}\left( \left[ n\right] _{q_{n}}+\beta \right) } \\
&\leq &\frac{9(1+\beta )^{2}\rho _{0}(x)}{q_{n}\left( \left[ n\right]
_{q_{n}}+\beta \right) }\left\{ \left[ n\right] _{q_{n}}\eta
_{n}(x)+1\right\} 
\end{eqnarray*}%
where $\eta _{n}(x):=\max \left\{ \alpha _{1,n}(x),\alpha
_{1,m(n)}(x),\alpha _{2,n}(x)\right\} $.\newline
Since $\eta _{n}(x)$ converges uniformly to zero, we have $\eta _{n}=\sup
\eta _{n}(x)$ such that $\eta _{n}$ converges to zero as $n\rightarrow
\infty .$ Let $\gamma \in N_{0}$ and $f\in B_{\rho _{\gamma }}(%
\mathbb{R}
_{+})$ be a fixed function. From Theorem 8 and above inequality, we can write%
\begin{eqnarray*}
&&\frac{\left\vert L_{n}^{\ast (\alpha ,\beta )}(f;q_{n},x)-f(x)\right\vert 
}{\rho _{\gamma +1}(x)} \\
&\leq &\sqrt{\frac{L_{n}^{\ast (\alpha ,\beta )}(\mu _{x,\gamma }^{2};q,x)}{%
\rho _{\gamma +1}^{2}(x)}}\left( 1+\frac{1}{\delta _{n}}\sqrt{L_{n}^{\ast
(\alpha ,\beta )}(\psi _{x}^{2};q_{n},x)}\right) \Omega _{\rho _{\gamma
}}(f;\delta _{n}) \\
&\leq &\sqrt{\frac{L_{n}^{\ast (\alpha ,\beta )}(\mu _{x,\gamma
}^{2};q_{n},x)\rho _{0}(x)}{\rho _{\gamma +1}^{2}(x)}}\left( 1+\frac{1}{%
\delta _{n}}\sqrt{\frac{9(1+\beta )^{2}\rho _{0}(x)}{q_{n}\left( \left[ n%
\right] _{q_{n}}+\beta \right) }\left\{ \left[ n\right] _{q_{n}}\eta
_{n}(x)+1\right\} }\right) \Omega _{\rho _{\gamma }}(f;\delta _{n}) \\
&\leq &12(1+\beta )\sqrt{\frac{L_{n}^{\ast (\alpha ,\beta )}(\mu _{x,\gamma
}^{2};q_{n},x)}{\rho _{2(\gamma +1)}(x)}}\left( 1+\frac{1}{\delta _{n}}\sqrt{%
\frac{\left[ n\right] _{q_{n}}\eta _{n}(x)+1}{q_{n}\left( \left[ n\right]
_{q_{n}}+\beta \right) }}\right) \Omega _{\rho _{\gamma }}(f;\delta _{n}).
\end{eqnarray*}%
Since%
\begin{eqnarray*}
\ \mu _{x,\gamma }^{2}(t) &=&\left( 1+\left( x+\left\vert t-x\right\vert
\right) ^{2+\gamma }\right) ^{2}\leq 2\left( 1+\left( 2x+t\right)
^{4+2\gamma }\right)  \\
&\leq &2\left( 1+2^{4+2\gamma }\left( (2x)^{4+2\gamma }+t^{4+2\gamma
}\right) \right) ,
\end{eqnarray*}%
from Lemma 9, we get%
\begin{equation*}
L_{n}^{\ast (\alpha ,\beta )}(\mu _{x,\gamma }^{2};q,x)\leq \lambda _{\gamma
,q_{n}}^{2}\rho _{2(\gamma +1)}(x),
\end{equation*}%
where $\lambda _{\gamma ,q_{n}}^{2}=2^{5+2\gamma }\left( 2^{4+2\gamma
}+A_{4+2\gamma ,q_{n}}\right) .$ Choosing $\delta _{n}:=\sqrt{\frac{\left[ n%
\right] _{q_{n}}\eta _{n}+1}{q_{n}\left( \left[ n\right] _{q_{n}}+\beta
\right) }}$ and $K_{\gamma ,q_{0}}:=24(1+\beta )\lambda _{\gamma ,q_{0}}$,
where $q_{0}:=\underset{n\in N}{\min }q_{n}$, the proof is finished.
\end{proof}

\end{document}